\documentclass{amsart}
\usepackage{amsfonts,amssymb,amscd,amsmath,enumerate,verbatim,calc}
\usepackage[all]{xy}

\newcommand{\CM}{Cohen-Macaulay}

\newcommand{\wrt}{with respect to}

\newcommand{\n}{\mathfrak{n} }
\newcommand{\m}{\mathfrak{m} }
\newcommand{\M}{\mathfrak{M} }

\newcommand{\R}{\mathcal{R} }

\newcommand{\Sc}{\mathcal{S} }
\newcommand{\rt}{\rightarrow}

\newcommand{\grade}{\operatorname{grade}}
\newcommand{\depth}{\operatorname{depth}}

\newcommand{\Tor}{\operatorname{Tor}}

\newcommand{\codim}{\operatorname{codim}}

\newcommand{\Syz}{\operatorname{Syz}}

\theoremstyle{plain}

\newtheorem{theorem}{Theorem}[section]

\newtheorem{proposition}[theorem]{Proposition}

\theoremstyle{definition}

\newtheorem{example}[theorem]{Example}

\theoremstyle{remark}

\begin{document}

\title[Monotonicity of Hilbert functions]{ On the Monotonicity of Hilbert functions}
\author{Tony~J.~Puthenpurakal}
\date{\today}
\address{Department of Mathematics, IIT Bombay, Powai, Mumbai 400 076}

\email{tputhen@math.iitb.ac.in}

\subjclass{Primary 13A30; Secondary 13D40, 13D07}

 \begin{abstract}
 In this paper we show that a large class of one-dimensional  \CM \ local rings $(A,\m)$ has  the property that if $M$ is a maximal \CM \ $A$-module then the Hilbert function of  $M$ (\wrt \ $\m$) is non-decreasing. Examples include
 \begin{enumerate}
 \item
 Complete intersections $A = Q/(f,g)$ where $(Q,\n)$ is regular local of dimension three and $f \in \n^2 \setminus \n^3$.
 \item
 One dimensional \CM \ quotients of a two dimensional \CM \  local ring with pseudo-rational singularity.
 \end{enumerate}
\end{abstract}
 \maketitle
\section{introduction}
Let $(A,\m)$ be a $d$-dimensional Noetherian local ring  with residue field $k$ and let $M$ be a finitely generated $A$-module. Let $\mu(M)$ denote minimal number of generators of $M$ and let $\ell(M)$ denote its length. Let $\codim(A) = \mu(\m) - d$ denote the codimension of $A$.

Let $G(A) = \bigoplus_{n\geq 0}\m^n/\m^{n+1}$ be the associated graded ring of $A$ (\wrt \ $\m$) and let $G(M) = \bigoplus_{\n\geq 0}\m^nM/\m^{n+1}M$ be the associated graded module of $M$ considered as a $G(A)$-module. The ring $G(A)$ has a  unique graded maximal ideal
$\M_G =  \bigoplus_{n\geq 1}\m^n/\m^{n+1} $.
 Set $ \depth G(M) =  \grade(\M_G,G(M))$. Let $e(M)$ denote the multiplicity of $M$ (\wrt \ $\m$).

The Hilbert function of $M$ (\wrt \ $\m$) is the function
\[
H(M,n) = \ell \left( \frac{\m^nM}{\m^{n+1}M} \right) \quad \text{for all} \ n\geq 0.
\]
A  natural question is whether $H(M,n)$ is non-decreasing (when $\dim M > 0$).
It is clear that if $\depth G(M) > 0$ then the Hilbert function of $M$ is \emph{non}-decreasing, see Proposition 3.2 of \cite{Pu2}.
If $A$ is regular local then all maximal \CM \ (= MCM ) modules are free. Thus every MCM module of positive dimension over a regular local ring has a non-decreasing Hilbert function. The next case is that
of a hypersurface ring i.e., the completion $\widehat{A} = Q/(f)$ where $(Q,\n)$ is regular local and $f \in \n^2$.
In Theorem 1,\cite{Pu2} we prove that if $A$ is a hypersurface ring of positive dimension and if $M$ is a MCM $A$-module then the Hilbert function of $M$ is non-decreasing. See example 3.3, \cite{Pu2}  for an example of a MCM module $M$ over the hypersurface ring
$k[[x,y]]/(y^3)$ with $\depth G(M) = 0$. 

Let $(A,\m)$ be a strict complete intersection of positive dimension and let $M$ be a maximal \CM \ $A$-module with  bounded betti-numbers.  In Theorem 1, \cite{Pu3} we prove that the Hilbert function of $M$ is non-decreasing. We also prove an analogous statement for complete intersections of codimension two, see Theorem 2, \cite{Pu3}.

In the ring case Elias \cite[2.3]{Elias},  proved that the Hilbert function of a  one dimensional Cohen-Macaulay ring is non-decreasing if embedding dimension is three. The first example of a  one dimensional \CM \ ring $A$ with not monotone increasing Hilbert function was given by Herzog and Waldi; \cite[3d]{HW}. Later Orecchia, \cite[3.10]{Ore}, proved that for all $b \geq 5$ there exists a reduced one-dimensional \CM \ local ring of embedding dimension $b$  whose Hilbert function is not monotone increasing.
Finally in \cite{GR}  we can find similar example with embedding dimension four. A long standing conjecture in theory of Hilbert functions is that
the Hilbert function of a one dimensional  complete intersection  is non-decreasing. Rossi conjectures that a similar result holds for
Gorenstein rings.

In this paper we construct a large class of one dimensional \CM \ local rings $(A,\m)$ with the property that if $M$ is an MCM $A$-module then the Hilbert function of $M$ is non-decreasing. Recall a  \CM \ local ring $(B,\n)$ is said to have \textit{minimal multiplicity }if 
\[
e(B) = 1 + \codim(B).
\]
Our result is
\begin{theorem}\label{main}
Let $B,\n)$ be a two dimensional \CM \ local ring with minimal multiplicity. Let $(A,\m)$ be a one-dimensional \CM \ local ring which is a quotient of $B$. If $M$ is a maximal \CM \ $A$-module then the Hilbert function of $M$ (\wrt \ $\m$) is non-decreasing.
\end{theorem}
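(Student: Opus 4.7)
\emph{Proof plan.} The plan is to work with $M$ as a $B$-module (since $\n^n M = \m^n M$, the Hilbert functions with respect to $\m$ and $\n$ coincide), exploit the minimal multiplicity identity in $B$, and reduce monotonicity to an inequality involving the finite length module $M/xM$. After passing to a faithfully flat extension (e.g.\ $B \to B(t)$) we may assume the residue field of $B$ is infinite. Minimal multiplicity of $B$ lets us choose $x, y \in \n$ so that $J := (x, y)$ is a minimal reduction of $\n$ with $\n^2 = J\n$, and we may further arrange both $x$ and $y$ to be $M$-superficial and $M$-regular (possible because $M$ has only finitely many associated primes and the residue field is infinite). The identity that drives everything is
\[
\n^{k+1} N \;=\; J\n^k N \;=\; x\n^k N + y\n^k N \quad \text{for every $B$-module $N$ and every } k \geq 1.
\]

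Next, set $\ov{M} := M/xM$ (a module of finite length) and
\[
v_n \;:=\; \ell\bigl((\n^{n+1}M :_M x)/\n^n M\bigr).
\]
A snake-lemma argument applied to $0 \to M \xrightarrow{x} M \to \ov{M} \to 0$, intersected with the $\n$-adic filtration, yields
\[
H(M, n) - H(M, n-1) \;=\; H(\ov{M}, n) - (v_n - v_{n-1}) \qquad (n \geq 1),
\]
with $v_0 = 0$ and $v_n = 0$ for $n \gg 0$ by the $M$-superficiality of $x$. Monotonicity of $H(M, -)$ thus reduces to the claim
\[
v_n - v_{n-1} \;\leq\; H(\ov{M}, n) \quad \text{for all } n \geq 1.
\]

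To prove this inequality I would combine the minimal multiplicity identity with the $M$-regularity of $x$ to obtain, for $n \geq 1$,
\[
(\n^{n+1} M :_M x) \;=\; \n^n M + (y\n^n M :_M x).
\]
Since $y$ is $M$-regular, the rule $m \mapsto z$ defined by $xm = yz$ produces a well-defined injection $(y\n^n M :_M x) \hookrightarrow (xM :_M y) \cap \n^n M$; reducing modulo $xM$ maps this into $(0 :_{\ov{M}} y) \cap \ov{\n}^n \ov{M}$. The other key input is that $\ov{B} := B/xB$ is one dimensional \CM \ with minimal multiplicity (it inherits $e(\ov{B}) = e(B) = 1 + \codim(B) = 1 + \codim(\ov{B})$), so the same identity now gives $\ov{\n}^{k+1}\ov{M} = y \cdot \ov{\n}^k\ov{M}$ for $k \geq 1$. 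Combining the embedding into $(0 :_{\ov{M}} y)$ with the surjectivity of multiplication by $y$ on the $\ov{\n}$-adic layers of $\ov{M}$ should yield the desired bound on $v_n - v_{n-1}$.

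\emph{Main obstacle.} The technical heart of the argument is converting the embedding $(y\n^n M :_M x) \hookrightarrow (0 :_{\ov{M}} y) \cap \ov{\n}^n \ov{M}$ into a sharp, layer-by-layer inequality against $H(\ov{M}, n)$. The submodule $(0 :_{\ov{M}} y) \subseteq \ov{M}$ is nonzero precisely because $(x, y)$ fails to be an $M$-regular sequence (as $\depth_B M = 1 < 2$), and controlling its successive intersections with $\ov{\n}^n \ov{M}$ calls for a careful bookkeeping that exploits both the Koszul relations coming from $(x, y)$ being a regular sequence on $B$ and the minimal multiplicity surjection by $y$ on $\ov{M}$. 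I expect this step to require most of the work.
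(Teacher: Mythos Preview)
Your setup is sound (the difference formula for $H(M,n)-H(M,n-1)$ and the identity $(\n^{n+1}M:_Mx)=\n^nM+(y\n^nM:_Mx)$ are both correct), but the proposal is not a proof: what you call the ``main obstacle'' is the entire content of the theorem. The embedding you build controls the size of $(y\n^nM:_Mx)$ modulo $xM$, hence at best the individual $v_n$; it does not give you the \emph{difference} $v_n-v_{n-1}$, which is what must be compared with $H(\ov M,n)$. Passing to $(0:_{\ov M}y)\cap\ov\n^{\,n}\ov M$ loses exactly the layer-by-layer information you need, and the fact that $\ov B$ has minimal multiplicity tells you only that $y\colon \ov\n^{\,n}\ov M\twoheadrightarrow\ov\n^{\,n+1}\ov M$ is surjective, which gives $H(\ov M,n)\ge H(\ov M,n+1)$ but says nothing about $v_n-v_{n-1}$. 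I do not see how to close this gap with the tools you have set up; the Koszul relations on $B$ do not transfer to $M$ since $\depth_B M=1$.

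The paper takes a quite different, more structural route that sidesteps this bookkeeping. Instead of moding out by one superficial element, it lifts $M$ to its first $B$-syzygy $N=\Syz^B_1(M)$, which is maximal \CM\ over $B$ and hence (by minimal multiplicity of $B$) has $G(N)$ \CM\ with $\deg h_N\le 1$. It then packages all of the torsion data into the graded module $L_1(M)=\bigoplus_{n\ge 0}\Tor_1^B(M,B/\n^{n+1})$ over the Rees algebra $\mathcal R=B[\n u]$, and uses the exact sequence $0\to L_1(M)\to L_0(N)\to L_0(F)\to L_0(M)\to 0$. A superficial sequence $x_1,x_2$ gives $x_1u,x_2u\in\mathcal R_1$ regular on $L_0(N)\oplus L_0(F)$ (because $G(N)$ and $G(B)$ are \CM), hence regular on $L_1(M)$; since $\dim L_1(M)=2$ this makes $L_1(M)$ \CM, so its $h$-polynomial $l(z)$ has non-negative coefficients. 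A one-line Hilbert-series computation from the exact sequence then yields $l(z)=c+h_M(z)$ for a constant $c$, so $h_i\ge 0$ for all $i\ge 1$, which is exactly the monotonicity.

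In short: rather than trying to bound $v_n-v_{n-1}$ against $H(\ov M,n)$ by hand, the paper realises the whole sequence $(v_n)$ (up to a shift) as the Hilbert function of a \CM\ module over the Rees algebra, and non-negativity of the $h$-vector drops out for free. Your elementary approach might still be completable, but as written the crucial inequality is only a hope.
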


We now give examples where our result holds.

\begin{example}
Let $(Q,\n)$ be a regular local ring of dimension three. Let $f_1,f_2 \in  \n^2 $ be an $Q$-regular sequence. Assume $f_1 \in \n^2 \setminus\n^3$. Let $A = Q/(f_1,f_2)$. Then if $M$ is a maximal \CM \ $A$-module then the Hilbert function of $M$ (\wrt \ $\m$) is non-decreasing.
The reason for this is that $B = Q/(f_1)$ has minimal multiplicity.
\end{example}

\begin{example}\label{rat}
Let $(B,\n)$ be a two dimensional local ring with pseudo-rational singularity. Then $B$ has minimal multiplicity, see \cite[5.4]{LT}. In particular if $A = B/P$, $P$ a prime ideal of height one or if $A = B/(x)$ where $x$ is $B$-regular then if $M$ is a maximal \CM \ $A$-module then the Hilbert function of $M$ (\wrt \ $\m$) is non-decreasing.
\end{example}

\begin{example}
There is a large class of one dimensional local rings $(R,\m)$ with minimal multiplicity. For examples Arf rings have this property, \cite[2.2]{L}. Let $B = R[X]_{(\m , X)}$. Then $B$ is a two dimensional \CM \ local ring with minimal multiplicity.
\end{example}

Here is an overview of the contents of the paper. 
In Section two  we introduce notation and discuss a few preliminary facts 
that we need. In section three we prove Theorem \ref{main}.

\section{Preliminaries}
In this paper all rings are Noetherian and all modules considered are assumed to be finitely generated (unless otherwise stated). Let $(A,\m)$ be
 a local ring of dimension $d$ with residue field $k = A/\m$. Let $M$ be
 an
$A$-module. If $m$ is a non-zero
 element of $M$ and if $j$ is the largest integer such that $m \in \m^{j}M$,
then we let $m^*$ denote the image of $m$
 in $\m^{j}M/\m^{j+1}M$. 

   The  formal power series
\[
  H_M(z) = \sum_{n \geq 0} H(M,n)z^n
\]
is called the \emph{Hilbert series } of $M$. It is well known that
it is of the form
\begin{equation*}
 H_M(z)  = \frac{h_M(z)}{(1-z)^r}, \ \text{where}\ \ r = \dim M \ \  \
 \text{and} \ 
 h_M(z)  \in
\mathbb{Z}[z].
\end{equation*}
We call  $h_M(z)$ the \emph{h-polynomial} of $M$. 
 If $f$ is a polynomial we use $f^{(i)}$ to denote its  $ i $-th derivative.
 The integers $ e_i(M) = h^{(i)}_{M} (1)/i!$ for $ i \geq 0 $  are called
the
 \emph{Hilbert coefficients} of $M$. The number $ e(M) = e_0(M) $  is
the \emph{multiplicity}
 of $ M $.

\s \textbf{Base change:}
\label{AtoA'}
 Let $\phi \colon (A,\m) \rt (A',\m')$ be a local ring homomorphism. Assume
   $A'$ is a faithfully flat $A$
algebra with $\m A' = \m'$. Set $\m' = \m A'$ and if
 $N$ is an $A$-module set $N' = N\otimes_A A'$.
 In these cases it can be seen that

\begin{enumerate}[\rm (1)]
\item
$\ell_A(N) = \ell_{A'}(N')$.
\item
 $H(M,n) = H(M',n)$ for all $n \geq 0$.
\item
$\dim M = \dim M'$ and  $\depth_A M = \depth_{A'} M'$.
\item
$\depth G(M) = \depth G(M')$.
\end{enumerate}

 \noindent The specific base changes we do are the following:

(i) $A' = A[X]_S$ where $S =  A[X]\setminus \m A[X]$.
The maximal ideal of $A'$ is $\n = \m A'$.
The residue
field of $A'$ is $K = k(X)$. 

(ii) $A' = \widehat{A}$ the completion of $A$ with respect to the maximal ideal.

Thus we can assume that our ring $A$ is complete with infinite residue field.

\textbf{I:} \textit{$L_i(M)$} \\
Let $(A,\m)$ be a Noetherian local ring and $M$ a 
$A$-module. We simplify a construction from \cite{Pu2}.

\s Set $L_0(M) = \bigoplus_{n \geq 0} M/\m^{n+1}M$. Let $\R = A[\m u]$ be the \emph{Rees-algebra} of $\m$. Let $\Sc = A[u]$. Then $\R$ is a subring of $\Sc$. Set
$M[u] = M\otimes_A \Sc$ an $\Sc$-module and so an $\R$-module. Let $\R(M) = \bigoplus_{n \geq 0}\m^nM$ be the Rees-module of $M$ with respect to $\m$. We have the following exact sequence of $\R$-modules
\[
0 \rt \R(M) \rt M[u] \rt L_0(M)(-1) \rt 0.
\]
Thus $L_0(M)(-1)$ (and so $L_0(M)$) is an $\R$-module. We note that $L_0(M)$ is \emph{not} a finitely generated $\R$-module. Also note that $L_0(M) = M \otimes_A L_0(A)$.

\s For $i \geq 1$ set 
$$L_i(M) = \Tor^A_i(M, L_0(A)) = \bigoplus_{n \geq 0 } \Tor^A_i(M, A/\m^{n+1}). $$
We assert that $L_i(M)$ is a finitely generated $\R$-module for $i \geq 1$. It is sufficient to prove it for $i = 1$. We tensor the exact sequence $0 \rt \R \rt \Sc \rt L_0(A)(-1) \rt 0$ with $M$ to obtain a sequence of $\R$-modules
\[
0 \rt L_1(M)(-1) \rt \R \otimes_A M \rt M[u] \rt  L_0(M)(-1) \rt 0.
\] 
Thus $ L_1(M)(-1)$ is a $\R$-submodule of $\R \otimes_A M$. The latter module is a finitely generated $\R$-module. It follows that $L_1(M)$ is a finitely generated $\R$-module.

\s \label{dimL1} Now assume that $A$ is \CM \ of dimension $d \geq 1$.  Set $N = \Syz^A_1(M)$ and $F = A^{\mu(M)}$.  We tensor the exact sequence
\[
0 \rt N \rt F  \rt M \rt 0,
\]
with $L_0(A)$ to obtain an exact sequence of $\R$-modules
\[
0 \rt L_1(M) \rt  L_0(N) \rt  L_0(F) \rt  L_0(M) \rt 0.
\]
It is elementary to see that the function $n \rt \ell(\Tor^A_1(M, A/\m^{n+1}))$ is polynomial of degree $\leq d - 1$. By \cite[Corollary II]{IP}
if $M$ is non-free then it is polynomial of degree $d-1$.
 Thus $\dim L_1(M) = d$ if $M$ is non-free. 

\textbf{II:} \emph{Superficial sequences.}

\s An element
$x \in \m$ is said to be \emph{superficial}
  for $M$ if there exists an integer $c > 0$ such that
$$ (\m^{n}M \colon_M x)\cap\m^cM =  \m^{n-1}M \ \text{ for all }\quad n > c.
$$
 Superficial elements always
exist if  $k$ is infinite  \cite[p.\ 7]{Sa}. A
sequence $x_1,x_2,\ldots,x_r$ in a local ring $(A,\m)$ is said to
be a \emph{superficial sequence} for $M$ if $x_1$ is superficial
for $M$ and $x_i$ is superficial for $M/(x_1,\ldots,x_{i-1})M$ for
$2\leq i \leq r$.

We need the following:
\begin{proposition}\label{GL}
Let $(A,\m)$ be a \CM \ ring of dimension $d$ and let $M$ be a \CM \ $A$-module of dimension $r$. Let $x_1,\ldots,x_c$ be an $M$-superficial sequence with $c \leq r$. Assume $x_1^*,\cdots,x_c^*$ is a $G(M)$-regular sequence. Let $\R = A[\m u]$ be the Rees algebra of $\m$. Set $X_i = x_iu \in \R_1$. Then $X_1,\ldots,X_c$ is a $L_0(M)$-regular sequence.
\end{proposition}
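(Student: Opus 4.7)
The plan is to induct on the length $c$; the substance lies in the base case $c=1$, while the inductive step is a clean quotient identification.

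For the base case, set $x = x_1$ and $X = xu \in \R_1$. Multiplication by $X$ on $L_0(M)$ in degree $n$ is the map $M/\m^{n+1}M \to M/\m^{n+2}M$ sending $m \mapsto xm$, whose kernel is $(\m^{n+2}M :_M x)/\m^{n+1}M$. I would show this vanishes for every $n \geq 0$. The assumption that $x^*$ is $G(M)$-regular supplies two facts: first, $x$ itself is $M$-regular (otherwise pick $m \ne 0$ with $xm=0$, and let $n$ be maximal with $m \in \m^nM$; then $m^* \ne 0$ yet $x^*m^* = 0$, a contradiction); second, by the Valabrega--Valla criterion, $xM \cap \m^{n+2}M = x\m^{n+1}M$ for every $n \geq 0$. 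Combining the two: if $xm \in \m^{n+2}M$, write $xm = xm'$ for some $m' \in \m^{n+1}M$, and cancel $x$ to conclude $m \in \m^{n+1}M$. Hence $X$ is a non-zerodivisor on $L_0(M)$.

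For the inductive step, the key is the isomorphism of $\R$-modules
\[
L_0(M)/X_1 L_0(M) \;\cong\; L_0(M/x_1 M),
\]
established by observing that the surjection $M \twoheadrightarrow M/x_1 M$ induces a degreewise surjection $L_0(M) \twoheadrightarrow L_0(M/x_1 M)$ whose kernel in degree $n$, namely $(x_1 M + \m^{n+1}M)/\m^{n+1}M$, coincides with the image of $X_1 \colon L_0(M)_{n-1} \to L_0(M)_n$. The hypotheses then descend to $M/x_1 M$: it is \CM \ of dimension $r-1$; the sequence $x_2,\ldots,x_c$ is $M/x_1M$-superficial by definition; and the standard identification $G(M/x_1 M) \cong G(M)/x_1^* G(M)$, valid because $x_1^*$ is $G(M)$-regular, ensures $x_2^*,\ldots,x_c^*$ is a $G(M/x_1 M)$-regular sequence. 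The inductive hypothesis applied to $M/x_1 M$ then shows $X_2,\ldots,X_c$ is regular on $L_0(M/x_1 M) \cong L_0(M)/X_1 L_0(M)$, completing the induction.

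The main (modest) obstacle is the base case: one has to see precisely why combining superficiality of $x$ with $G(M)$-regularity of $x^*$ yields the colon equality $\m^{n+2}M :_M x = \m^{n+1}M$ on the nose for every $n$, rather than just eventually. Once Valabrega--Valla is invoked this is immediate, but it is where the hypotheses do their work; the rest of the argument is formal bookkeeping.
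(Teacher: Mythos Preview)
Your proof is correct and follows the same inductive scheme as the paper: induct on $c$, handle $c=1$, then use the identification $L_0(M)/X_1 L_0(M) \cong L_0(M/x_1M)$ together with $G(M)/x_1^*G(M) \cong G(M/x_1M)$ to descend. The only difference is that the paper dispatches the base case by citing \cite[2.2(3)]{Pu2}, whereas you supply a direct argument via the Valabrega--Valla criterion; your argument is in effect a proof of that cited fact, so the two approaches coincide.
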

\begin{proof}
We prove the result by induction. First consider the case when $c = 1$. Then the result follows from \cite[2.2(3)]{Pu2}.  We now assume that $c \geq 2$ and the result holds for all \CM \ $A$-modules and sequences of length $c-1$. By $c = 1$ result we get that $X_1$ is $L_0(M)$-regular. Let $N = M/(x_1)$. As $x_1^*$ is $G(M)$-regular we get $G(M)/x_1^* G(M) \cong G(N)$. So $x_2^*,\ldots,x_c^*$ is a $G(N)$-regular sequence. Now also note that $L_0(M)/X_1L_0(M) = L_0(N)$. Thus the result follows.
\end{proof}

\section{Proof of Theorem \ref{main}}
In this section we give a proof of Theorem \ref{main}. We also give an example which shows that it is possible for $\depth G(M)$ to be zero.
\begin{proof}[Proof of Theorem \ref{main}]
We may assume that the residue field of $A$ is infinite.
Let $N = \Syz^B_1(M)$. Then $N$ is a maximal \CM \ $B$-module. As $B$ has minimal multiplicity it follows that $N$ also has minimal multiplicity. So $G(N)$ is \CM \ and $\deg h_N(z) \leq 1$, see \cite[Theorem 16]{Pu1}. Set $r = \mu(M)$, $h_B(z) = 1 + hz$ and as $e(N) = re(A)$ we write $h_N(z) = r + c + (rh - c)z $ (here $c$ can be negative). 

Set $F = A^r$. The exact sequence $0\rt N \rt F \rt M \rt 0$ induces an exact sequence 
\begin{equation}\label{1}
0 \rt L_1(M) \rt L_0(N) \rt L_0(F) \rt L_0(M) \rt 0
\end{equation}
 of $\R$-modules.
Let $x_1,x_2$ be an $N \oplus B$-superficial sequence. Then $x_1^*, x_2^*$ is a $G(N)\oplus G(B)$-regular sequence. Set $X_i = x_iu \in \R_1$. Then by \ref{GL} it follows that $X_1,X_2$ is a $L_0(N) \oplus L_0(F)$-regular sequence. By (\ref{1}) it follows that $X_1, X_2$ is also a $L_1(M)$-regular sequence. As $\dim L_1(M) = 2$ (see \ref{dimL1}) it follows that $L_1(M)$ is a \CM \ $\R$-module. Let the Hilbert series of $L_1(M)$ be $l(z)/(1-z)^2$. Then the coefficients of $l(z)$ is non-negative. 

Let $l(z) = l_0 + l_1z + \cdots + l_m z^n$ and let $h_M(z) = h_0 + h_1z + \cdots + h_pz^p$. By (\ref{1}) we get
\begin{align*}
(1-z)l(z) &= h_N(z) - h_F(z) + (1-z)h_M(z), \\
&= r + c + (rh - c)z - r(1 + h z) + (1-z)h_M(z), \\
&= c(1-z) + (1-z)h_M(z).
\end{align*}
It follows that
\[
l(z) = c + h_M(z).
\]
It follows that $m = p$ and $h_i = l_i$ for $i \geq 1$. In particular $h_i \geq 0$ for $i \geq 1$. Also $h_0 = \mu(M) > 0$. Thus $h_M(z)$ has non-negative coefficients. It follows that the Hilbert function of $M$ is non-decreasing. 
\end{proof}

We now give an example which shows that it is possible for $\depth G(M)$ to be zero.
\begin{example} \label{ex2}
Let $K$ be a field and let $A = K[[t^6, t^7, t^{15}]]$. It can be verified that
\[
A \cong \frac{K[[X,Y,Z]]}{(Y^3-XZ, X^5 - Z^2)}
\]
and that
\[
G(A) \cong \frac{K[X,Y,Z]}{(XZ, Y^6, Y^3Z,Z^2)}
\]
Note that $ZY^2$ annihilates $(X,Y,Z)$. So $\depth G(A) = 0$.

Set $B  = K[[X,Y,Z]]/(Y^3-XZ)$. Then $B$ is a two-dimensional \CM \ ring with minimal multiplicity and $A$ is a one-dimensional \CM \ quotient of $B$. Set $M = A$.
\end{example}

\end{document}